\documentclass[11pt,reqno]{amsart}

\usepackage[usenames]{color}
\usepackage{graphicx}
\usepackage{amscd}
\usepackage[colorlinks=true,
linkcolor=webgreen,
filecolor=webbrown,
citecolor=webgreen]{hyperref}

\definecolor{webgreen}{rgb}{0,.5,0}
\definecolor{webbrown}{rgb}{.6,0,0}
\usepackage{amsthm}
\usepackage{fullpage}
\usepackage{enumerate}
\usepackage{epstopdf}
\usepackage{graphics,amsmath,amssymb}
\usepackage{graphicx}

\usepackage{amsfonts}
\usepackage{latexsym}
\usepackage{url}

\setlength{\textwidth}{6.5in}
\setlength{\oddsidemargin}{.1in}
\setlength{\evensidemargin}{.1in}
\setlength{\topmargin}{-.5in}
\setlength{\textheight}{8.9in}

\newcommand{\seqnum}[1]{\href{http://oeis.org/#1}{\underline{#1}}}

\theoremstyle{plain}
\newtheorem{theorem}{Theorem}[section]
\newtheorem{lemma}[theorem]{Lemma}
\newtheorem{corollary}[theorem]{Corollary}
\newtheorem{proposition}[theorem]{Proposition}

\theoremstyle{definition}

\theoremstyle{remark}


\newcommand{\tr}{{\rm tr}}
\newcommand\bigzero{\makebox(0,0){\text{\huge0}}}

\def\adots{
  \mathinner{\mkern1mu\raise1pt\hbox{.}\mkern2mu\raise4pt\hbox{.}
  \mkern2mu\raise7pt\vbox{\kern7pt\hbox{.}}\mkern1mu}}

\begin{document}

\begin{center}

 \title[Matrices in the Hosoya Triangle]
 {Matrices in the Hosoya Triangle}
 \author{Matthew Blair}
 \thanks{ Several of the main results in this paper were found by the first author while working on his undergraduate research project under the guidance of the second and third authors}
 \address{Department of Mathematical Sciences\\
		The Citadel\\
		Charleston, SC \\
		U.S.A.}
 \email{blairm@citadel.edu}
 \author{Rigoberto Fl\'orez}
 \address{Department of Mathematical Sciences\\
		The Citadel\\
		Charleston, SC \\
		U.S.A.}
  \email{rflorez1@citadel.edu}
 \author{Antara Mukherjee}
 \address{Department of Mathematical Sciences\\
		The Citadel\\
		Charleston, SC \\
		U.S.A.}
 \email{mukherjeea1@citadel.edu}
\end{center}

\begin{abstract}
In this paper we use well-known  results from linear algebra as tools to explore some properties of products of Fibonacci numbers.
Specifically, we explore the behavior of the  eigenvalues, eigenvectors, characteristic polynomials, determinants, and the norm
of non-symmetric matrices embedded in the Hosoya triangle. We discovered that most of these objects either embed again in the
Hosoya triangle or they give rise to Fibonacci identities.

We also study the nature of these matrices when their entries are taken $\bmod$ $2$. As a result, we found an infinite family
of non-connected graphs. Each graph in this family has a complete graph with loops attached to each of  its vertices as a
component and the other components are isolated vertices. The Hosoya triangle allowed us to show the beauty of both, the algebra and geometry.
\end{abstract}

\maketitle

\section {Introduction}

The \emph{Hosoya triangle} \cite{hosoya}, is a triangular array where the entries are products of Fibonacci numbers.
Our main purpose of this paper is to show some connection between this triangle and several aspects of geometry of the Hosoya triangle,
graph theory, and the Fibonacci number sequence,  all of them by means of linear algebra techniques.
Several authors have been geometrically representing  the beauty of some elementary concepts of  algebra, number theory,
and combinatorics bridging them with Fibonacci numbers using the Hosoya triangle
\cite{Bondarenko,florezHiguitaJunesGCD, florezjunes, florezHiguitaMukherjeeGeometry, griffiths, hosoya, koshy,koshy2}.

The recurrence relations of Fibonacci numbers provide an interesting way to study properties of matrices with these entries. For example,
in 2002 Lee et. al  \cite{lee} studied matrices that have squares of Fibonacci numbers   in the diagonal and the rest of the entries
are generalized Fibonacci numbers.  In particular, they studied the Cholesky factorizations and the  eigenvalues of these matrices.
In 2008 Stanimirovi\'{c} et. al. \cite{stanimirovic} studied the inverses of generalized Fibonacci and Lucas matrices.

In this paper we study the nature of non-symmetric matrices embedded in the Hosoya triangle, (matrices with product of Fibonacci
numbers as entries). We use well-known results in linear algebra as tools to explore  different patterns within this triangle.
We present three infinite families of matrices where the eigenvalues and eigenvectors satisfy a ``closure property"
in the set of Fibonacci numbers and partially in the Hosoya triangle. We classify those three families in rank one matrices,
skew-triangular matrices, and antidiagonal matrices.

The first family (matrices of rank one) have the feature that the matrices are products of
two vectors $\mathbf{u}$ and $\mathbf{v}^{T}$. The entries of the vectors are consecutive Fibonacci numbers ---in fact, the vectors
$\mathbf{u}$ and $\mathbf{v}$ are  located on the sides of the Hosoya triangle. The matrices of this family have exactly one non-zero
eigenvalue which is a combination of Lucas and Fibonacci numbers. We would like to emphasize that these matrices are
diagonalizable (with exactly one non-zero eigenvalue) where the entries of the eigenvectors are again points of the Hosoya triangle.
For instance, the vector $\mathbf{u}$  is one of those eigenvectors.

We connect the first family of matrices with graph theory by observing the fractal generated by the Hosoya triangle $\bmod \; 2$.
Therefore, the matrices of our first family give rise to the adjacency matrices of undirected and non-connected  graphs.  The graphs consist of
 a complete graph with loops attached to each of its vertices as one component and some isolated vertices as the other components.

The second family satisfies that the non-zero eigenvalues are Fibonacci numbers convolved with themselves
(see \cite{Czabarka, Moree} or \cite{sloane} at \seqnum{A001629}). The matrices of the third family have the feature
that the product of their eigenvalues (that are not necessarily integers) is a product of Fibonacci numbers.

\subsection{Hosoya triangle} \label{CoordinateSystem}
In this section we give the classic definition of the Hosoya triangle denoted by $\mathcal{H}$.

The \emph{Hosoya sequence} $\left\{H(r,k)\right\}_{r,k> 0}$ is defined using the double recursion
\begin{equation}\label{Hosoya:Seq}
 H(r,k)= H(r-1,k)+H(r-2,k)  \; \text{ and } \;
 H(r,k)= H(r-1,k-1)+H(r-2,k-2)
 \end{equation}
with initial conditions
$H(1,1)= H(2,1)= H(2,2)= H(3,2)=1,$
where $1\le k \le r$. For brevity, we write $H_{r,k}$ instead of $H(r,k)$ throughout the paper.

This sequence gives rise to the \emph{Hosoya triangle}, where the entry in position $k$
(taken from left to right) of the $r${th} row is equal to $H_{r,k}$ see Table \ref{tabla1} (and also
\cite{sloane} at \seqnum{A058071}). For simplicity, in this paper we use $\mathcal{H}$ to denote the Hosoya triangle.
One can also refer to \cite{florezHiguitaJunesGCD, hosoya, koshy,koshy2} for the definition of the Hosoya triangle.
Another way to represent each entry  (or point) of the Hosoya triangle is $H_{r,k}= F_kF_{r-k+1}$ for positive integers
$r$ and $k$ with $1\le k\le r$  (see \cite{florezHiguitaJunesGCD,koshy}).  It is easy to
see that an $n$th \emph{diagonal} (either slash or backslash diagonal) in $\mathcal{H}$ is the collection of all
Fibonacci numbers multiplied by $F_n$.

\begin{table} [!ht] \small
\centering
\addtolength{\tabcolsep}{-3pt} \scalebox{.85}{
\begin{tabular}{ccccccccccccc}
&&&&&&                                                               $H_{1,1}$                                                 &&&&&&\\
&&&&&                                                  $H_{2,1}$     &&     $H_{2,2}$                                        &&&&&\\
&&&&                                         $H_{3,1}$    &&     $H_{3,2}$     &&     $H_{3,3}$                               &&&&\\
&&&                                 $H_{4,1}$   &&     $H_{4,2}$     &&     $H_{4,3}$      &&    $H_{4,4}$                     &&&\\
&&                        $H_{5,1}$     &&     $H_{5,2}$    &&     $H_{5,3}$     &&     $H_{5,4}$     &&     $H_{5,5}$             &&\\
&            $H_{6,1}$     &&    $H_{6,2}$    &&     $H_{6,3}$     &&     $H_{6,4}$      &&    $H_{6,5}$     &&    $H_{6,6}$    & \\
$H_{7,1}$     &&    $H_{7,2}$    &&     $H_{7,3}$     &&     $H_{7,4}$      &&    $H_{7,5}$     &&    $H_{7,6}$    &&    $H_{7,7}$     \\
&&&&&&&&&&&&\\
\end{tabular}}
\caption{Hosoya triangle  $\mathcal{H}$.} \label{tabla1}
\end{table}

\section{Non-symmetric matrices of rank one in the Hosoya triangle}

In this section we study the first family where every matrix is of rank one (matrices that are products of
two vectors). We first define matrices using the backslash diagonals of $\mathcal{H}$ and then present one of the main results of this paper. In this result we
give a  closed formula for the trace of these matrices. We also present a result on the eigenvectors and the eigenvalues of matrices found in the Hosoya triangle.

For $m, n$, and $t$, all positive integers with $m,t \leq n$, we define in \eqref{eq2} the\emph{ backslash matrix} $B(m,n,t)$
(the \emph{slash matrix} is defined similarly). Let $s=(t-1)$ and $r_i=(m+n-1)+i$ for $i=0,1,2,\ldots, s$, then
\begin{equation} \label{eq2}
B(m,n,t) =  \left[ {\begin{array}{lllll}
   H_{r_{0},m} & H_{r_{0}-1,m} & H_{r_{0}-2,m} & \cdots& H_{r_{0}-s,m} \\
   H_{r_{1},m+1}  &  H_{r_{1}-1,m+1}& H_{r_{1}-2,m+1} & \cdots & H_{r_{1}-s,m+1}\\
   \vdots &\vdots&\vdots&\ddots &\vdots\\
   H_{r_{s},m+s}  &  H_{r_{s}-1,m+s} & H_{r_{s}-2,m+s} & \cdots & H_{r_{s}-s,m+s}
  \end{array} } \right].
\end{equation}

For example,  Figure  \ref{Blair_matrix} Part (a) and Part (b) depicts  $B(3,7,5)$ and $B(1,7,7)$, respectively.
Note that the first entry of  $B(m,n,t)$  is the point in the intersection of
the $m$-th backslash diagonal and the $n$-th slash diagonal.
 In particular, the entry (point) in position $(1,1)$ of $B(3,7,5)$  (which is represented by $H_{r_{0},m}$) can be determined by
 writing $r_{0}=9$ and $m=3$ and using that $H_{r,k}=F_{k}F_{r-k+1}$.
Therefore,  $H_{9,3}=F_3F_7=26$. This technique may be used to find all entries of the matrix.

\begin{figure}[!ht]
	\includegraphics[scale=0.27]{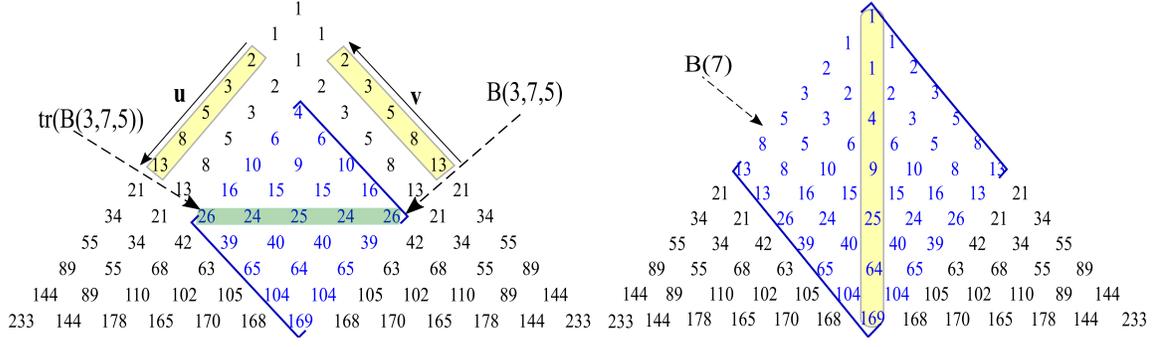}
\caption{(a) Matrix $B(3,7,5)$ in Hosoya triangle.\hspace{0.2cm} (b) Matrix $B(7)$ in Hosoya triangle.}\label{Blair_matrix}
\end{figure}
\subsection{Diagonalizable Matrices in Hosoya triangle}
In this section we describe some properties of diagonalizable matrices in the Hosoya triangle.

We use $\tr(B(m,n,t))$ to denote the trace of $B(m,n,t)$ throughout this section. The formula given in Corollary \ref{corollary:generalizedFibonacci}
generalizes the convolutions given in \cite{Czabarka, Moree} and \cite[A06733]{sloane}.

\begin{lemma}\label{Lemma1}
Let $m, n$,  and $t$ be fixed positive integers. If $L_{k}$ is the $k$th Lucas number, then the following hold
\begin{enumerate}[(a)]
	\item \label{SumProdFibo} $\displaystyle\sum_{i=0}^{t}F_{m+i}F_{n-i}=\left[(t+1)L_{m+n}-\sum_{i=0}^{t}(-1)^{n-i}L_{m-n+2i}\right]/5$ and
	\item \label{AltSumProdFibo} $\displaystyle\sum_{i=0}^{t-1}(-1)^{n-i-1}L_{m-n+2i}=(-1)^{n-t} F_{m-n+2t-1}+(-1)^{m-1} F_{n-m+1}$.
\end{enumerate}
\end{lemma}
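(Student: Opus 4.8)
The plan is to reduce both parts to closed forms via Binet's formula, handling (a) by a termwise product-to-sum identity and (b) by a telescoping argument; no induction is needed for either.

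For part~\eqref{SumProdFibo}, the key tool is the product-to-Lucas identity
\[
5F_aF_b = L_{a+b} - (-1)^{b}L_{a-b},
\]
which I would derive in one line from Binet's formulas $F_n=(\phi^n-\psi^n)/\sqrt5$ and $L_n=\phi^n+\psi^n$, using $\phi\psi=-1$. Applying this with $a=m+i$ and $b=n-i$ is the decisive move: the sum $a+b=m+n$ is \emph{independent of $i$}, while $a-b=m-n+2i$ and $(-1)^b=(-1)^{n-i}$. Hence termwise
\[
5F_{m+i}F_{n-i}=L_{m+n}-(-1)^{n-i}L_{m-n+2i}.
\]
Summing over $i=0,1,\dots,t$, the constant first term contributes $(t+1)L_{m+n}$, and dividing by $5$ reproduces \eqref{SumProdFibo} verbatim.

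For part~\eqref{AltSumProdFibo}, I would first pull out the alternating sign, writing the left side as $(-1)^{n-1}\sum_{i=0}^{t-1}(-1)^i L_{m-n+2i}$, and then show the inner sum telescopes. Using $L_j=F_{j+1}+F_{j-1}$, one checks that with $d_i:=(-1)^i F_{m-n+2i-1}$ we have $d_i-d_{i+1}=(-1)^i L_{m-n+2i}$, so the inner sum collapses to $d_0-d_t=F_{m-n-1}-(-1)^t F_{m-n+2t-1}$. Multiplying back by $(-1)^{n-1}$ and noting $(-1)^{n+t}=(-1)^{n-t}$ produces the term $(-1)^{n-t}F_{m-n+2t-1}$ directly. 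The final step is to identify the leftover $(-1)^{n-1}F_{m-n-1}$ with the target $(-1)^{m-1}F_{n-m+1}$; here I would invoke the reflection formula $F_{-k}=(-1)^{k+1}F_k$ with $k=n-m+1$, after which the short sign computation $(-1)^{n-1}(-1)^{n-m}=(-1)^{m-1}$ closes the identity.

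The routine part is (a), which is essentially immediate once the product-to-Lucas identity is in hand. The main obstacle is the sign and index bookkeeping in (b): getting the telescoping direction right, tracking the parity of $t$ through $(-1)^{n+t}=(-1)^{n-t}$, and especially handling the negative-index Fibonacci number $F_{m-n-1}$ via the reflection formula. These are the steps most prone to off-by-one and sign errors, so I would verify the telescoping and the final matching on the base case $t=1$ (where $L_{m-n}=F_{m-n+1}+F_{m-n-1}$ makes both sides visibly agree) before trusting the general computation.
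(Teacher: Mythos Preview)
Your proof is correct. For part~\eqref{SumProdFibo} your approach is exactly the paper's: the product-to-Lucas identity $5F_aF_b=L_{a+b}-(-1)^bL_{a-b}$ is precisely what ``simplifying $F_{m+i}F_{n-i}$ using the Binet formula'' amounts to, and the paper gives no more detail than that.

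For part~\eqref{AltSumProdFibo} you diverge slightly. The paper simply asserts that the identity ``follows easily using mathematical induction on $t$'' and omits the argument entirely. Your telescoping with $d_i=(-1)^iF_{m-n+2i-1}$ is a direct computation rather than an induction; of course the two are closely related (your identity $d_i-d_{i+1}=(-1)^iL_{m-n+2i}$ is exactly the content of the inductive step), but your version has the advantage of producing the closed form without having to know it in advance, and it makes transparent where the reflection formula $F_{-k}=(-1)^{k+1}F_k$ enters. The sign bookkeeping you flag as the main hazard is indeed the only delicate point, and your verification is sound.
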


\begin{proof}
We prove Part \eqref{SumProdFibo}, the proof of Part \eqref{AltSumProdFibo} follows easily using mathematical induction on $t$ and we omit it.
Simplifying the expression $F_{m+i}F_{n-i}$ using the Binet formula for Fibonacci numbers \cite{koshy} we obtain
	$$\sum_{i=0}^{t}F_{m+i}F_{n-i}=\left[(t+1)L_{m+n}-\sum_{i=0}^{t}(-1)^{n-i}L_{m-n+2i}\right]/5.$$
This completes the proof of Part \eqref{SumProdFibo}.
\end{proof}

\begin{proposition}\label{proposition:generalizedFibonacci}
If $m, n, t$ are fixed positive integers with $m,t\le n$ and $L_{k}$ represents Lucas numbers for $k\ge 0$ then
$$\tr(B(m,n,t))= \left[t L_{m+n}+(-1)^{n-t} F_{m-n+2t-1}+(-1)^{m-1} F_{n-m+1}\right]/5.$$
\end{proposition}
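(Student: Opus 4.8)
The plan is to reduce the trace to the single-index Fibonacci sum already handled by Lemma \ref{Lemma1}, and then to apply both of its parts in turn. First I would locate the diagonal entries of $B(m,n,t)$. The matrix is $t \times t$, since the row index $i$ ranges over $0,1,\ldots,s$ with $s=t-1$, so its trace is the sum of the $t$ entries in positions $(i,i)$. Reading off \eqref{eq2}, the entry in row $i$, column $j$ is $H_{r_i-j,\,m+i}$, so the diagonal entry (the case $j=i$) is $H_{r_i-i,\,m+i}$. Because $r_i=(m+n-1)+i$, the row index collapses to $r_i-i=m+n-1$, independently of $i$. Using the closed form $H_{r,k}=F_kF_{r-k+1}$ this gives
$$H_{m+n-1,\,m+i}=F_{m+i}\,F_{(m+n-1)-(m+i)+1}=F_{m+i}\,F_{n-i},$$
and hence
$$\tr(B(m,n,t))=\sum_{i=0}^{t-1}F_{m+i}\,F_{n-i}.$$

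Next I would invoke Lemma \ref{Lemma1}\eqref{SumProdFibo}, but with its upper limit set to $t-1$ rather than $t$ (so the coefficient ``$t+1$'' in the lemma becomes $t$). This expresses the trace as
$$\tr(B(m,n,t))=\Big[t\,L_{m+n}-\sum_{i=0}^{t-1}(-1)^{n-i}L_{m-n+2i}\Big]\big/5.$$
To collapse the remaining Lucas sum I would rewrite $(-1)^{n-i}=-(-1)^{n-i-1}$, so that the sum equals $-\sum_{i=0}^{t-1}(-1)^{n-i-1}L_{m-n+2i}$, which is exactly the negative of the left-hand side of Lemma \ref{Lemma1}\eqref{AltSumProdFibo}. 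Substituting that identity gives
$$\sum_{i=0}^{t-1}(-1)^{n-i}L_{m-n+2i}=-\big[(-1)^{n-t}F_{m-n+2t-1}+(-1)^{m-1}F_{n-m+1}\big],$$
and plugging this back into the bracketed expression yields precisely the claimed formula.

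The calculation is mostly bookkeeping, and the one subtlety — the main thing to get right — is the alignment of the summation limits with the two parts of Lemma \ref{Lemma1}. The trace sum runs from $i=0$ to $t-1$; matching this to Part \eqref{SumProdFibo}, whose upper limit is $t$, requires applying that part with its parameter replaced by $t-1$, which is exactly why the coefficient of $L_{m+n}$ comes out as $t$ and not $t+1$. By contrast, Part \eqref{AltSumProdFibo} already carries the upper limit $t-1$, so it applies with the proposition's $t$ unchanged; here the only thing to check is the sign flip $(-1)^{n-i}=-(-1)^{n-i-1}$, which converts the exponent into the $n-i-1$ demanded by that part. Once these shifts are lined up the two minus signs cancel and the two Fibonacci correction terms survive with exactly the signs stated. (Alternatively, the Lucas telescoping of Part \eqref{AltSumProdFibo} could be re-derived directly by induction on $t$, but reusing the lemma is cleaner.)
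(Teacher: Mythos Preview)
Your proof is correct and follows essentially the same route as the paper's: both identify the diagonal entries as $H_{r_i-i,m+i}=F_{m+i}F_{n-i}$ and then invoke Lemma~\ref{Lemma1}. The paper simply states ``the conclusion follows from Lemma~\ref{Lemma1}'', whereas you spell out the index shift for Part~\eqref{SumProdFibo} and the sign flip needed to apply Part~\eqref{AltSumProdFibo}, which is exactly the bookkeeping the paper suppresses.
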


\begin{proof}
From \eqref{eq2} we have that
$\tr(B(m,n,t))= \sum_{i=0}^{t-1} H_{r_{i}-i,m+i},$
 where $r_i=(m+n-1)+i$ for $i=0,1,2,\ldots, (t-1)$. Since $H_{r,k}=F_{k}F_{r-k+1}$, we have
 $\tr(B(m,n,t))= \sum_{i=0}^{t-1} F_{m+i}F_{n-i}$.
 The conclusion follows from Lemma \ref{Lemma1}.
 \end{proof}

\begin{corollary}\label{corollary:generalizedFibonacci}
If $m, n$, and $t$ are fixed positive integers, then
$$\sum_{i=0}^{t-1} F_{m+i} F_{n-i} = \left[t L_{m+n}+(-1)^{n-t} F_{m-n+2t-1}+(-1)^{m-1} F_{n-m+1}\right]/5.$$
\end{corollary}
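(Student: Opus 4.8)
The plan is to obtain this Corollary with essentially no new work, since it is the pure Fibonacci identity underlying Proposition \ref{proposition:generalizedFibonacci}. Indeed, the proof of that Proposition already establishes the key equality $\tr(B(m,n,t))=\sum_{i=0}^{t-1}F_{m+i}F_{n-i}$, and the Proposition supplies the closed form for the trace; reading these two facts together gives the Corollary immediately. However, the Proposition carries the hypothesis $m,t\le n$ (needed only so that $B(m,n,t)$ sits inside $\mathcal{H}$), whereas the Corollary is stated for arbitrary positive integers. I would therefore prefer a self-contained derivation straight from Lemma \ref{Lemma1}, which never invokes the matrix and so needs no such constraint.

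Concretely, I would first apply Lemma \ref{Lemma1}\eqref{SumProdFibo} with $t$ replaced by $t-1$, obtaining
$$\sum_{i=0}^{t-1}F_{m+i}F_{n-i}=\left[t\,L_{m+n}-\sum_{i=0}^{t-1}(-1)^{n-i}L_{m-n+2i}\right]/5.$$
Next I would absorb the leading minus sign into the exponent using $-(-1)^{n-i}=(-1)^{n-i-1}$, which turns $-\sum_{i=0}^{t-1}(-1)^{n-i}L_{m-n+2i}$ into $\sum_{i=0}^{t-1}(-1)^{n-i-1}L_{m-n+2i}$. This is precisely the left-hand side of Lemma \ref{Lemma1}\eqref{AltSumProdFibo}, so I would substitute its value $(-1)^{n-t}F_{m-n+2t-1}+(-1)^{m-1}F_{n-m+1}$. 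Combining the two displayed lines yields exactly the asserted formula.

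There is no genuine obstacle here; the only points requiring mild care are the index shift $t\mapsto t-1$ (and the corresponding change of the upper summation limit) and the sign manipulation $-(-1)^{n-i}=(-1)^{n-i-1}$, both of which are routine. The one conceptual remark worth flagging is that, to justify dropping the hypothesis $m,t\le n$, one interprets any Fibonacci or Lucas number with nonpositive index through the standard extensions $F_{-k}=(-1)^{k+1}F_{k}$ and $L_{-k}=(-1)^{k}L_{k}$; since Lemma \ref{Lemma1} is proved via the Binet formula, which is valid for all integer indices, the identity persists for every positive $m,n,t$ under this convention.
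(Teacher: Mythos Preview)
Your proposal is correct and matches the paper's approach: the paper states the Corollary with no proof text, intending it as an immediate restatement of the identity established in the proof of Proposition~\ref{proposition:generalizedFibonacci}, which in turn just says ``the conclusion follows from Lemma~\ref{Lemma1}.'' Your explicit derivation---apply Lemma~\ref{Lemma1}\eqref{SumProdFibo} with $t\mapsto t-1$, flip the sign into the exponent, then substitute Lemma~\ref{Lemma1}\eqref{AltSumProdFibo}---is precisely what that sentence unpacks to, and your added remark on extending to all positive $m,n,t$ via the Binet formula is a reasonable gloss on why the constraint $m,t\le n$ can be dropped.
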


Let $A$ be an $n\times n$ matrix. Using  linear algebra techniques it is easy to verify the following statement.
\[A \text{ is of rank 1 if and only if } A=\mathbf{u}\cdot \mathbf{v}^{T} \text{ for some non-zero column vectors  } \mathbf{u}, \mathbf{v} \in \mathbb{R}^{n}.\]
Moreover, the characteristic equation of $A$ is given by $x^{n-1}(x-\tr(A))=0$ and that $\mathbf{u}$ is the
eigenvector associated with $\tr(A)$ (where $\tr(A)$ denotes the trace of the matrix $A$).
Notice that the multiplicity of $x=0$ is $(n-1)$, therefore there are $(n-1)$
eigenvectors associated with $x=0$. Finally, $\mathbf{u}$ is not orthogonal to $\mathbf{v}$.

Now we establish certain notations needed to prove Proposition \ref{Proposition:eigen}. Let $W=\{u,v_{1},v_{2},\ldots,v_{t-1}\}$
be the set of eigenvectors of $B(m,n,t)$, then $v_{i}$ is orthogonal to $u$ for all $i$. Let $E_{j}(x)$ indicate the elementary
matrix obtained by multiplying the $j$-th row of the $t\times t$ identity matrix by $x$.

\begin{proposition}\label{Proposition:eigen}
 Let $B^{\prime}=B(m,n,t)$ be a backslash matrix embedded in the Hosoya triangle $\mathcal{H}$. Then the following hold

\begin{enumerate}[(1)]
\item \label{eigenvalues:B} the eigenvalues of $B^{\prime}$ are $\lambda=0$ with algebraic multiplicity $(t-1)$
                            and $\lambda=\tr(B^{\prime})$ with algebraic multiplicity one.
\item \label{Diagaliz:B} The matrix $B^{\prime}$ is diagonalizable and if $s=t-1$ then the eigenvectors of $B^{\prime}$ are given by,
\[
    \mathbf{u} = \begin{bmatrix}
           F_{m} \\
           F_{m+1} \\
           F_{m+2}\\
           \vdots \\
           F_{m+s}
         \end{bmatrix},
         \mathbf{v_{1}} = \begin{bmatrix}
           -F_{n-1} \\
           F_{n} \\
           0\\
           \vdots \\
           0
         \end{bmatrix}, \dots, \mathbf{v_{s}} = \begin{bmatrix}
           -F_{n-s} \\
           0 \\
           0\\
           \vdots \\
           F_{n}
         \end{bmatrix}.
  \]
\end{enumerate}
\end{proposition}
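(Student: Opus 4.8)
The plan is to show first that $B' = B(m,n,t)$ has rank one, and then to invoke the rank-one characterization recalled just above the statement. To establish the factorization, I would compute a general entry of $B'$ using the closed form $H_{r,k} = F_k F_{r-k+1}$. With $s = t-1$ and $r_i = (m+n-1)+i$, the entry of $B'$ in row $i+1$ and column $j+1$ (for $0 \le i,j \le s$) is $H_{r_i - j,\, m+i}$, and a short index computation gives $r_i - j - (m+i) + 1 = n - j$, so this entry equals $F_{m+i}\,F_{n-j}$. The variables separate, so $B' = \mathbf{u}\,\mathbf{v}^{T}$ with $\mathbf{u} = (F_m, F_{m+1}, \dots, F_{m+s})^{T}$ and $\mathbf{v} = (F_n, F_{n-1}, \dots, F_{n-s})^{T}$, both of which are nonzero.

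With the factorization in hand, Part \eqref{eigenvalues:B} is immediate from the recalled fact: the characteristic polynomial of the $t\times t$ rank-one matrix $\mathbf{u}\mathbf{v}^{T}$ is $x^{t-1}\bigl(x - \tr(B')\bigr)$, so the eigenvalues are $0$ with algebraic multiplicity $t-1$ and $\tr(B')$ with algebraic multiplicity one. The one point to check is that these multiplicities do not accidentally merge, i.e. that $\tr(B') \ne 0$. But $\tr(B') = \sum_{i=0}^{s} F_{m+i}F_{n-i}$ is a sum of products of positive Fibonacci numbers, since the hypotheses $m \le n$ and $t \le n$ keep every index positive ($m+i \ge 1$ and $n - i \ge n - s \ge 1$); hence $\tr(B') > 0$. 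This is of course consistent with the closed form in Proposition \ref{proposition:generalizedFibonacci}.

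For Part \eqref{Diagaliz:B}, I would argue diagonalizability by comparing geometric and algebraic multiplicities. Because $B'$ has rank one, its kernel has dimension $t-1$, so the geometric multiplicity of $\lambda = 0$ equals its algebraic multiplicity $t-1$; and $\lambda = \tr(B') \ne 0$ has algebraic, hence geometric, multiplicity one. The two geometric multiplicities sum to $t$, so $B'$ is diagonalizable. It then remains to verify the displayed eigenvectors. That $\mathbf{u}$ is an eigenvector for $\tr(B')$ follows from $\mathbf{u}\mathbf{v}^{T}\mathbf{u} = (\mathbf{v}^{T}\mathbf{u})\,\mathbf{u}$ together with $\mathbf{v}^{T}\mathbf{u} = \sum_{i=0}^{s} F_{n-i}F_{m+i} = \tr(B')$. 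For the eigenvalue $0$, observe that $\mathbf{w} \in \ker B' = \ker(\mathbf{u}\mathbf{v}^{T})$ precisely when $\mathbf{v}^{T}\mathbf{w} = 0$ (as $\mathbf{u} \ne 0$), so I would simply check $\mathbf{v}^{T}\mathbf{v}_i = 0$ for each $i$: the only two nonzero contributions are $F_n\cdot(-F_{n-i})$ from the first coordinate and $F_{n-i}\cdot F_n$ from coordinate $i+1$, and they cancel. Finally, $\mathbf{v}_1, \dots, \mathbf{v}_s$ are linearly independent because each carries the entry $F_n \ne 0$ in a distinct coordinate, so they form a basis of the $(t-1)$-dimensional kernel.

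The argument is routine throughout; no genuinely hard spectral input is needed once the factorization is visible. The only step requiring real care is the index bookkeeping that produces $F_{m+i}F_{n-j}$, since a single off-by-one slip in $r_i - j - (m+i) + 1$ would spoil the separation of variables and the whole rank-one reduction. I therefore expect that correctly identifying the factorization $B' = \mathbf{u}\mathbf{v}^{T}$, rather than any eigenvalue computation, is the main (if modest) obstacle.
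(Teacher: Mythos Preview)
Your proof is correct and follows essentially the same route as the paper: both arguments hinge on the factorization $B' = \mathbf{u}\mathbf{v}^{T}$ obtained from $H_{r,k} = F_k F_{r-k+1}$, after which Part~\eqref{eigenvalues:B} follows from the rank-one characteristic polynomial and Part~\eqref{Diagaliz:B} from identifying a basis of the null space. The only cosmetic difference is that the paper row-reduces $B'$ via elementary matrices to exhibit the kernel, whereas you verify directly that each $\mathbf{v}_i$ is orthogonal to $\mathbf{v}$ and that the $\mathbf{v}_i$ are independent; your explicit check that $\tr(B')>0$ is a small point the paper leaves implicit.
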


\begin{proof}
By the definition of $B^{\prime}$ above,  $H_{r,k}=F_kF_{r-k+1}$ using \eqref{eq2} we have

\[
B^{\prime}=  \left[ {\begin{array}{lllll}
   F_{m}F_{n} & F_{m}F_{n-1}& F_{m}F_{n-2} & \cdots& F_{m}F_{n-s} \\
   F_{m+1}F_{n}  &  F_{m+1}F_{n-1}& F_{m+1}F_{n-2} & \cdots & F_{m+1}F_{n-s}\\
   \vdots &\vdots&\vdots&\ddots &\vdots\\
   F_{m+s} F_{n} &  F_{m+s} F_{n-1}& F_{m+s}F_{n-2} & \cdots & F_{m+s}F_{n-s}\\
  \end{array} } \right]
.\]

Now it is easy to verify that $B^{\prime}=\mathbf{u}\mathbf{v}^T$ where $\mathbf{u}$ is as given in the statement of
Part \eqref{Diagaliz:B} and $\mathbf{v}^T=[ F_{n},  F_{n-1},  F_{n-2}, \dots,  F_{n-s}]$.
This and the application of the linear algebra techniques described above completes the proof of Part \eqref{eigenvalues:B}.

Proof of Part \eqref{Diagaliz:B}. It is clear that $\mathbf{u}$ is an eigenvector associated with $\lambda=\tr(B^{\prime})$
(see the discussion above). In order to find the eigenvectors associated with $\lambda=0$ , it is enough to find a
basis for the null space of $B^{\prime}$. Since
\[E_{s}\left(\frac{1}{F_{m+s}}\right)E_{s-1}\left(\frac{1}{F_{m+s-1}}\right)\cdots E_{2}\left(\frac{1}{F_{m+1}}\right)E_{1}\left(\frac{1}{F_{m}}\right) B^{\prime}
= \left[ {\begin{array}{ccccc}
   F_{n} & F_{n-1} & F_{n-2} & \cdots& F_{n-s} \\
   F_{n}  &  F_{n-1} & F_{n-2} & \cdots & F_{n-s}\\
   \vdots &\vdots&\vdots&\ddots &\vdots\\
   F_{n}  &  F_{n-1}& F_{n-2} & \cdots & F_{n-s}\\
  \end{array} } \right],\]
  from this last matrix  it is easy to see that  $\{ \mathbf{v_{1}},  \mathbf{v_{2}}, \dots,  \mathbf{v_{s}}\}$
  is a basis for the null space of $B^{\prime}$. This completes the proof of Part \eqref{Diagaliz:B}. \end{proof}

Notice, in general, if in the definition given in \eqref{Hosoya:Seq} instead of $H_{1,1}= H_{2,1}= H_{2,2}=H_{3,1}=1,$
we consider $H_{1,1}=a^2; H_{2,1}=ab;  H_{2,2}=ab; H_{3,2}=b^2$ with $a, b \in \mathbb{Z}$
(see examples in \cite{sloane}, \seqnum{A284115, A284129, A284126, A284130, A284127, A284131, A284128}) then the matrix $B(m,n,t)$
defined over this general recursive sequence  satisfies the following properties: first $B(m,n,t)$ has rank 1; second,
there are vectors  {\bf u} and {\bf v} from the sides of the general triangle such that $B(m,n,t)={\bf u} \cdot {\bf v}^T$
and third, that the only non-zero eigenvalue of the matrix is given by  the trace $\tr(B(m,n,t))$ which is equal to
${\bf u} \cdot {\bf v}^T$ (similar to the vectors {\bf u} and {\bf v} shown in Figure \ref{Blair_matrix}(a)).

A particular case of the matrices $B(m,n,t)$ when $m=1$ and $t=n$ are persymmetric matrices
(matrices that are symmetric with respect to the antidiagonal), we denote these matrices
simply by $B(n)$ (or for brevity by $B$). Therefore, persymmetric matrices are square matrices
that are symmetric along the skew-diagonal (see Figure \ref{Blair_matrix}(b)).

If $\mathbf{u}$, $\mathbf{v_{i}}$ for $i=1, \dots, n-1$ are the eigenvectors of $B$
(as given in Proposition \ref{Proposition:eigen} Part \eqref{Diagaliz:B}), then the matrix of eigenvectors of $B$ is
\begin{equation}\label{Q:Eigenvect}
Q_{n}=[\mathbf{u}, \mathbf{v_{1}}, \dots, \mathbf{v_{n-1}}].
\end{equation}
If $\tr(B)$ is the non-zero eigenvalue associated to $\mathbf{u}$ (recall that the eigenvalues associated to
$\mathbf{v_{i}}$ for $i=1, \dots, n-1$ are all zero), then the diagonal matrix of $B$ is $D_{n}=[d_{ij}]$,
where the only non-zero entry is $d_{11}=\tr(B)$.

\begin{corollary}\label{Cor:persymmetric}
If $Q_{n}$ and $B(n)$ (or $B$) are as described above, then for $k>0$ the following  holds
\[B^{k}Q_{n}=Q_{n}\left(\frac{nL_{n+1}+2F_{n}}{5}\right)^{k}.\]
\end{corollary}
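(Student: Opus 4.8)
The plan is to read off the result from the diagonalization of $B$ provided by Proposition~\ref{Proposition:eigen} and then to evaluate its unique non-zero eigenvalue using Proposition~\ref{proposition:generalizedFibonacci}. Recall that $B=B(n)=B(1,n,n)$, so Part~\eqref{Diagaliz:B} of Proposition~\ref{Proposition:eigen} tells us that $B$ is diagonalizable with eigenvector matrix $Q_n=[\mathbf{u},\mathbf{v_1},\dots,\mathbf{v_{n-1}}]$, where $\mathbf{u}$ is associated with the eigenvalue $\tr(B)$ and each $\mathbf{v_i}$ with the eigenvalue $0$. Hence $Q_n$ is invertible and $B=Q_nD_nQ_n^{-1}$ with $D_n=\mathrm{diag}(\tr(B),0,\dots,0)$.

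First I would iterate the diagonalization. Raising $B=Q_nD_nQ_n^{-1}$ to the $k$th power gives $B^k=Q_nD_n^kQ_n^{-1}$, and right multiplication by $Q_n$ yields $B^kQ_n=Q_nD_n^k$. Since $D_n$ has $\tr(B)$ in position $(1,1)$ and zeros elsewhere, and since $k>0$, the matrix $D_n^k$ has $(\tr(B))^k$ in position $(1,1)$ and zeros elsewhere; this is precisely the scalar power displayed on the right-hand side of the statement. An equally short route that bypasses $Q_n^{-1}$ is to argue column by column: from $B\mathbf{u}=\tr(B)\,\mathbf{u}$ we get $B^k\mathbf{u}=(\tr(B))^k\mathbf{u}$, while $B\mathbf{v_i}=\mathbf{0}$ forces $B^k\mathbf{v_i}=\mathbf{0}$ for every $k\ge 1$; assembling these columns reproduces $B^kQ_n=Q_nD_n^k$. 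The hypothesis $k>0$ is used exactly here, to annihilate the columns $\mathbf{v_i}$.

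It remains to identify $\tr(B)$ with the stated Fibonacci--Lucas expression. I would specialize Proposition~\ref{proposition:generalizedFibonacci} to the persymmetric parameters $m=1$ and $t=n$. With these choices $tL_{m+n}=nL_{n+1}$, while both sign factors collapse, since $(-1)^{n-t}=(-1)^{0}=1$ and $(-1)^{m-1}=(-1)^{0}=1$, and the two Fibonacci terms become $F_{m-n+2t-1}=F_{n}$ and $F_{n-m+1}=F_{n}$. Therefore
\[\tr(B)=\frac{nL_{n+1}+F_n+F_n}{5}=\frac{nL_{n+1}+2F_n}{5},\]
and substituting this value into $(\tr(B))^k$ completes the argument.

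I do not expect a substantive obstacle: the corollary is the diagonalization identity $B^kQ_n=Q_nD_n^k$ followed by a one-line specialization of an already-proved trace formula. The only places demanding care are the sign bookkeeping when $m=1$ and $t=n$ (both exponents of $-1$ turn out even, so each sign is $+1$), and the observation that $k>0$ is what guarantees $D_n^k$ retains a single non-zero entry rather than degenerating to the identity.
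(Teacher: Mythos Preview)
Your proof is correct and follows essentially the same route as the paper: diagonalize $B$ via Proposition~\ref{Proposition:eigen}, iterate to obtain $B^kQ_n=Q_nD_n^k$, and then identify $\tr(B)$ with the stated expression. The only difference is that the paper obtains $\tr(B)=(nL_{n+1}+2F_n)/5$ by citing \cite{Czabarka,Moree}, whereas you derive it by specializing the paper's own Proposition~\ref{proposition:generalizedFibonacci} at $m=1$, $t=n$; your sign and index bookkeeping there is correct, and this makes the argument self-contained.
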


\begin{proof}
From Proposition \ref{Proposition:eigen} Part \eqref{Diagaliz:B} we know that $B$ is diagonalizable.
This and \eqref{Q:Eigenvect} imply that $B=Q_{n} D_{n} Q_{n}^{-1}$. Therefore, $B^{k}=Q_{n} D^{k}_{n} Q_{n}^{-1}$.
Therefore, $B^{k}Q_{n}=Q_{n}(\tr(B))^kI_{n}$, where $I_{n}$ is the $n\times n$ identity matrix.
To complete the proof recall from \cite{Czabarka,Moree} that
$\tr(B)=\sum_{i=1}^{n}F_{n-i}F_{i}=(nL_{n+1}+2F_{n})/5.$
\end{proof}

\subsection{Normal matrices}

Suppose that $A=B^{T}B$, where $B$ is the persymmetric matrix $B(n)$. Note that $A$ has rank one and therefore it
has exactly one non-zero eigenvalue $\lambda$. So, the spectral radius of $A$ is $\rho (A) = \lambda$. From linear algebra
we know that $||A||_{2}=\sqrt{\rho(A^TA)}=\sqrt{\lambda}$ (see \cite{Golub,Zhang}).  The identity in Proposition \ref{norm:result}
Part \eqref{norm:result5} is  well-known. However, this tells us that adding the square of every entry of the matrix gives the
sum of all points in the antidiagonal which is again a point of the  Hosoya triangle. The identity in Proposition
\ref{norm:result} Part \eqref{norm:result6} is also a well-known identity, but this tells us that the sum of all entries in
the matrix is the difference of two points in the Hosoya triangle. Another interpretation is given by recalling that the
matrix norm of $A$ measures  how much a vector $X$ can be extended by applying matrix $A$ on it. Applying these concepts
to the matrices in the Hosoya triangle, we obtain that the norm is again a point within the triangle. So, this norm  provides
a good geometric interpretation in the Hosoya triangle of two well-known Fibonacci identities. One can refer back to
Figures \ref{Blair_matrix}~(b) and  \ref{Blair_matrix2}~(b) to explore the geometric significance of the value of those
norms. The proof of Proposition \ref{norm:result} Part  \eqref{norm:result2} gives the geometry of the identity
given in Part \ref{norm:result4} (it is easy to see that $A$ is a normal).
Proposition \ref{norm:result} Part \ref{norm:result7} shows that the singular value of $B$ (see \cite{Golub}) is again an entry of the Hosoya triangle.

 Note that if $C=B\circ B$ is the Hadamard product \cite{Horn}
of the matrix $B$ with itself, then the only non-zero eigenvalue of $C$ is $\lambda=\tr{(C)}$.

\begin{proposition}\label{norm:result}
	If $A=B^{T}B$, then
	\begin{enumerate}[(1)]
		\item \label{norm:result1} $A$ has exactly one non-zero eigenvalue $\lambda$.
		\item  \label{norm:result2} If $\lambda$ is the non-zero eigenvalue of $A$, then $\lambda$ is the product of the sum of antidiagonal elements of
		$B$ with the sum of
		antidiagonal elements of $B^{T}$. Thus, $\lambda=\left(\sum_{i=1}^{n}H_{2i+1,i}\right)^{2}=\left(F_{n}F_{n+1}\right)^{2}$.
		\item  \label{norm:result3} The eigenvalue $\lambda =\tr(A)=\sum_{i,j=1}^{n}\left(F_{i}F_{j}\right)^2=\left(F_{n}F_{n+1}\right)^{2}$.
		\item \label{norm:result5} If $\lambda$ is the non-zero eigenvalue of $A$, then
		$$\sqrt{\lambda}=||B||_{2}=F_{n}F_{n+1}=\label{norm:result4} \sqrt{\sum_{i,j=1}^{n}\left(F_{i}F_{j}\right)^2}=\sum_{i=1}^{n}F_{i}^2.$$
		\item \label{norm:result6} $\sqrt{\sum_{i,j=1}^{n} F_{i}F_{j}}=\sum_{i=1}^{n}F_{i}=F_{n+2}-1=||B||_{\infty}/F_{n}.$
		\item \label{norm:result7}  $\sqrt{\lambda}=H_{2n,n}$.
	\end{enumerate}
\end{proposition}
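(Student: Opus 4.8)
The plan is to reduce everything to the rank-one factorization already established. By Proposition \ref{Proposition:eigen} the persymmetric matrix $B=B(n)$ factors as $B=\mathbf{u}\mathbf{v}^{T}$ with $\mathbf{u}=(F_{1},\dots,F_{n})^{T}$ and $\mathbf{v}=(F_{n},F_{n-1},\dots,F_{1})^{T}$. First I would substitute this into $A=B^{T}B$ to get $A=\mathbf{v}\mathbf{u}^{T}\mathbf{u}\mathbf{v}^{T}=(\mathbf{u}^{T}\mathbf{u})\,\mathbf{v}\mathbf{v}^{T}$. Since $\mathbf{u}^{T}\mathbf{u}=\sum_{i}F_{i}^{2}$ is a positive scalar, $A$ is a positive multiple of the rank-one matrix $\mathbf{v}\mathbf{v}^{T}$; invoking the rank-one discussion preceding Proposition \ref{Proposition:eigen} (characteristic polynomial $x^{n-1}(x-\tr A)$), this immediately gives Part \eqref{norm:result1}, with single non-zero eigenvalue $\lambda=\tr(A)=(\mathbf{u}^{T}\mathbf{u})(\mathbf{v}^{T}\mathbf{v})$.

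For Parts \eqref{norm:result2} and \eqref{norm:result3} I would evaluate this trace two ways. Directly, $\tr(A)=\sum_{i,j=1}^{n}(B_{ij})^{2}=\sum_{i,j=1}^{n}(F_{i}F_{j})^{2}=(\sum_{i}F_{i}^{2})^{2}$, which together with the classical identity $\sum_{i=1}^{n}F_{i}^{2}=F_{n}F_{n+1}$ yields $\lambda=(F_{n}F_{n+1})^{2}$; this is Part \eqref{norm:result3}. For the antidiagonal interpretation in Part \eqref{norm:result2}, the key point is persymmetry: the antidiagonal entries of $B=\mathbf{u}\mathbf{v}^{T}$ are $B_{i,n+1-i}=u_{i}v_{n+1-i}=F_{i}^{2}=H_{2i-1,i}$ (using $H_{r,k}=F_{k}F_{r-k+1}$), and likewise for $B^{T}$. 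Hence each antidiagonal sum equals $\sum_{i}F_{i}^{2}=\mathbf{u}^{T}\mathbf{u}=\mathbf{v}^{T}\mathbf{v}$, so $\lambda=(\mathbf{u}^{T}\mathbf{u})(\mathbf{v}^{T}\mathbf{v})$ is exactly the advertised product of the two antidiagonal sums, equal to $(F_{n}F_{n+1})^{2}$.

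The norm statements then follow by taking square roots and identifying coordinates. For Parts \eqref{norm:result5} and \eqref{norm:result7}, since $A=B^{T}B$ is positive semidefinite with spectral radius $\lambda$, I have $\|B\|_{2}=\sqrt{\rho(B^{T}B)}=\sqrt{\lambda}=F_{n}F_{n+1}$; the chain of equalities in Part \eqref{norm:result5} is just the two expressions for $\lambda$ from Part \eqref{norm:result3} placed under the square root, and Part \eqref{norm:result7} is the single observation $H_{2n,n}=F_{n}F_{(2n)-n+1}=F_{n}F_{n+1}$. For Part \eqref{norm:result6} I would use the companion identity $\sum_{i=1}^{n}F_{i}=F_{n+2}-1$: then $\sum_{i,j}F_{i}F_{j}=(\sum_{i}F_{i})^{2}$ gives square root $F_{n+2}-1$, while the $\infty$-norm is the maximum absolute row sum of $B=\mathbf{u}\mathbf{v}^{T}$, namely $\max_{i}F_{i}\sum_{j}F_{j}=F_{n}(F_{n+2}-1)$, so dividing by $F_{n}$ returns $F_{n+2}-1$.

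Almost every step is bookkeeping once the factorization $B=\mathbf{u}\mathbf{v}^{T}$ is in hand, so I expect no serious obstacle; the two places demanding care are (i) Part \eqref{norm:result2}, where one must notice that persymmetry forces the reflected column vector to coincide with $\mathbf{u}$, so that the antidiagonal sum genuinely equals $\mathbf{u}^{T}\mathbf{u}$ and $\lambda$ factors as a product of antidiagonal sums rather than merely as $(\mathbf{u}^{T}\mathbf{u})^{2}$, and (ii) the $\|B\|_{\infty}$ computation in Part \eqref{norm:result6}, where one must confirm the maximal row sum is attained at the last row and track the factor $F_{n}$. The two Fibonacci identities $\sum_{i}F_{i}^{2}=F_{n}F_{n+1}$ and $\sum_{i}F_{i}=F_{n+2}-1$ can be cited or proved in one line by induction.
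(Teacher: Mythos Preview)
Your approach is essentially the same as the paper's: both reduce to the rank-one factorization $B=\mathbf{u}\mathbf{v}^{T}$, rewrite $A=(\mathbf{u}^{T}\mathbf{u})\mathbf{v}\mathbf{v}^{T}$, and read off the unique non-zero eigenvalue as $(\mathbf{u}^{T}\mathbf{u})(\mathbf{v}^{T}\mathbf{v})=(\sum_{i}F_{i}^{2})^{2}=(F_{n}F_{n+1})^{2}$, with the remaining parts following from standard norm identities and the two classical Fibonacci sums. You in fact supply more detail than the paper (which omits Parts~\eqref{norm:result1}, \eqref{norm:result3}, \eqref{norm:result5}--\eqref{norm:result7} as ``straightforward'' and cites Schur's inequality for the trace equality), and your computation of the antidiagonal entries as $B_{i,n+1-i}=F_{i}^{2}=H_{2i-1,i}$ quietly corrects what appears to be an index typo ($H_{2i+1,i}$) in the statement.
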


\begin{proof}
The proofs of Parts \eqref{norm:result1}, \eqref{norm:result3}, and \eqref{norm:result5}--\eqref{norm:result7}, are straightforward
using linear algebra techniques mentioned in the above comments, therefore we omit those details. Since $A$ is a normal matrix the proof of Part
\eqref{norm:result4} follows  from the Schur inequality \cite{Zhang} (an alternative proof can be found using parts \eqref{norm:result2} and \eqref{norm:result3} or basic algebra).

Proof of Part  \eqref{norm:result2}. We know that
$A=B^{T}B=(\mathbf{u}\cdot\mathbf{v}^{T})^{T}(\mathbf{u}\cdot\mathbf{v}^{T})=\mathbf{v}\cdot (\mathbf{u}^{T}\cdot \mathbf{u})\cdot \mathbf{v} ^{T}$.
Since, $(\mathbf{u}^{T}\cdot \mathbf{u})$ is a real number we have $A=(\mathbf{u}^{T}\cdot \mathbf{u})\mathbf{v} \cdot \mathbf{v} ^{T}$.
Therefore, the eigenvalues of $A$ are actually the eigenvalues of $\mathbf{v} \cdot \mathbf{v} ^{T}$ multiplied by $\mathbf{u}^{T}\cdot \mathbf{u}$.
We know that the non-zero eigenvalue of $\mathbf{v}\cdot \mathbf{v}^{T}$ is given by
$\tr(\mathbf{v}\cdot \mathbf{v}^{T})$.

Since \[\mathbf{u}^{T}\cdot \mathbf{u}=[F_{n}\, F_{n-1}\,\cdots\,F_{2}\, F_{1}]\begin{bmatrix}
           F_{n} \\
           F_{n-1} \\
           F_{n-2}\\
           \vdots \\
           F_{1}
         \end{bmatrix} =\sum_{i=1}^{n}F_{i}^{2}=F_{n}F_{n+1}  \text{ and } \tr(\mathbf{v}\cdot \mathbf{v}^{T})=\sum_{i=1}^{n}F_{i}^{2}=F_{n}F_{n+1},\]
         we have that the eigenvalue $\lambda$ of $A$ is $\left(F_{n}F_{n+1}\right)^{2}$. This completes the proof of part (2).
\end{proof}

\subsection{Graphs in the Hosoya triangle}

Several authors have been interested in graphs generated by considering the Pascal triangle entries $\bmod \; 2$. The first example is the well-known Sierpin\'ski
triangle. Other examples can be found in  Koshy \cite[Chapter 9]{koshy2}. Here he discussed Pascal graphs in the Pascal Binary Triangle
(see also \cite{deo, BakerSwart, romik}). We use a similar procedure for a family of  matrices in $\mathcal{H}$.

We consider the adjacency matrix constructed by taking each entry of  the persymmetric matrix  $B(n)$ modulo $2$ where $ n\equiv 2 \bmod  3$.
This gives rise to a  family of adjacency matrices of undirected  and non-connected  graphs.  The graphs are composed of a complete graph with loops
attached to each of its vertices as a component and the  other components are some isolated vertices (see Table \ref{tabla2}).

\begin{proposition}
If $k\ge 0$ and $n=3k+2$, then the graph of the adjacency matrix corresponding to $B(n) \bmod 2$ is a
complete graph on $2(k+1)$ vertices with loops at every vertex and $k$ isolated vertices.
\end{proposition}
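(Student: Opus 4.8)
The plan is to reduce everything to the rank-one factorization $B(n)=\mathbf{u}\mathbf{v}^{T}$ supplied by Proposition~\ref{Proposition:eigen}. Taking $m=1$ and $t=n$ in that proposition gives $\mathbf{u}=(F_{1},F_{2},\dots,F_{n})^{T}$ and $\mathbf{v}^{T}=(F_{n},F_{n-1},\dots,F_{1})$, so the $(a,b)$ entry of $B(n)$ is simply $F_{a}F_{n-b+1}$. Reducing this product modulo $2$ factors as $(F_{a}\bmod 2)(F_{n-b+1}\bmod 2)$, so the entire question collapses to knowing when a Fibonacci number is odd.

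First I would record the elementary fact that the Fibonacci sequence modulo $2$ is purely periodic with period $3$, namely $1,1,0,1,1,0,\dots$; equivalently, $F_{k}$ is even exactly when $3\mid k$. Hence, modulo $2$, the $(a,b)$ entry of $B(n)$ equals $1$ precisely when $3\nmid a$ and $3\nmid(n-b+1)$, and it equals $0$ otherwise.

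The key step, and the one place where the hypothesis $n=3k+2$ is actually used, is to simplify the column condition. Since $n\equiv 2\pmod 3$ we have $n-b+1\equiv -b\pmod 3$, so $3\mid(n-b+1)$ if and only if $3\mid b$. Therefore the reduced matrix $M:=B(n)\bmod 2$ has the clean entrywise description $M_{a,b}=[3\nmid a]\,[3\nmid b]$, where $[\,\cdot\,]$ denotes the indicator. This formula makes $M$ \emph{manifestly symmetric}, so it legitimately is the adjacency matrix of an undirected graph with loops permitted on the diagonal. I would flag this symmetry as the main thing to get right: it is precisely what guarantees a genuine undirected graph rather than a digraph, and it is exactly the congruence $n\equiv 2\pmod 3$ that forces it (the symmetry can fail for other residues).

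Finally I would read the graph off from $M_{a,b}=[3\nmid a]\,[3\nmid b]$. A vertex $a$ contributes a nonzero row and column exactly when $3\nmid a$; call these the \emph{active} vertices. On the set of active vertices every entry of $M$ is $1$, including the diagonal, so they induce a complete graph together with a loop at each vertex. A vertex $a$ with $3\mid a$ has an all-zero row and column and is therefore isolated. It then remains only to count: among $1,\dots,n=3k+2$ the multiples of $3$ are $3,6,\dots,3k$, giving $k$ isolated vertices, while the remaining $(3k+2)-k=2(k+1)$ vertices are active. This produces a complete graph on $2(k+1)$ vertices with a loop at every vertex, together with $k$ isolated vertices, as claimed.
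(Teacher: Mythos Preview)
Your proof is correct and follows essentially the same route as the paper's: both reduce to the parity pattern of Fibonacci numbers via $F_k\equiv 0\pmod 2\iff 3\mid k$, conclude that $b_{ij}\equiv 0\pmod 2$ exactly when $3\mid i$ or $3\mid j$, and then count. Your write-up is in fact more careful than the paper's in two places: you explicitly carry out the column-index computation $n-b+1\equiv -b\pmod 3$ (the paper asserts the conclusion directly), and you explicitly observe that the resulting matrix is symmetric, justifying the passage to an undirected graph.
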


\begin{proof} It is known that the Fibonacci number $F_{n}\equiv 0 \bmod 2 \iff 3\mid n$. This and the definition of $B(n)$ imply that every
third row and every third column of $B(n)$ are formed by even numbers and that the remaining rows and columns are formed by odd numbers only.
Thus, if $b_{ij}$ is an entry of $B(n)$, then $b_{ij}\equiv 0 \bmod 2 \iff i \equiv 0 \mod 3 \text{ or } j \equiv 0 \mod 3$.
This and $n=3k+2$ imply that $B(n) \bmod 2 $ contains $k$ columns and $k$ rows with zeros as entries. The remaining 2(k+1) rows and
columns have ones as entries. These two features of $B(n) \bmod 2 $  give us a complete graph on $2(k+1)$ vertices with loops at every vertex and $k$
isolated vertices. This completes the proof.
\end{proof}

\begin{table}[!ht]
\begin{tabular}{|l|l|c|} \hline
$3k+2$ &  Matrix  \hspace{6.5cm}       Graph  \\ \hline  \hline
2   &
        \begin{tabular}{p{6.5cm}c}
            {$\displaystyle
                {\renewcommand{\arraystretch}{1.2}
                \begin{pmatrix}
                    1 & 1 \\
 		   1 & 1 \\
                \end{pmatrix}}
            $}
            &
            $\vcenter{\hbox{\includegraphics[scale=0.3]{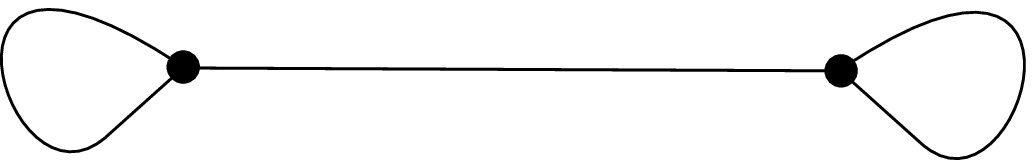}}}$
        \end{tabular}
     \\ [20pt]\hline
5   &
        \begin{tabular}{p{6.5cm}c}
            {$\displaystyle
                {\renewcommand{\arraystretch}{1.2}
                \begin{pmatrix}
 1 & 1 & 0 & 1 & 1 \\
 1 & 1 & 0 & 1 & 1 \\
 0 & 0 & 0 & 0 & 0 \\
 1 & 1 & 0 & 1 & 1 \\
 1 & 1 & 0 & 1 & 1 \\
                \end{pmatrix}}
            $}
            &
            $\vcenter{\hbox{\includegraphics[scale=0.3]{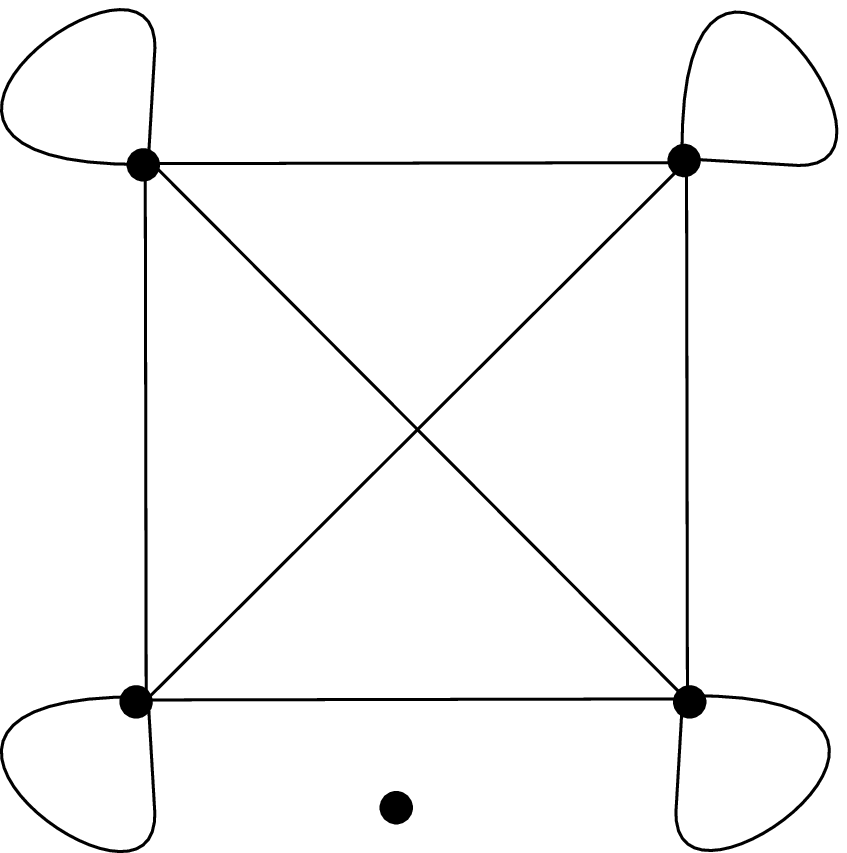}}}$
        \end{tabular}
     \\  [20pt] \hline

8   &
        \begin{tabular}{p{6.5cm}c}
            {$\displaystyle
                {\renewcommand{\arraystretch}{1.2}
                \begin{pmatrix}
 1 & 1 & 0 & 1 & 1 & 0 & 1 & 1 \\
 1 & 1 & 0 & 1 & 1 & 0 & 1 & 1 \\
 0 & 0 & 0 & 0 & 0 & 0 & 0 & 0 \\
 1 & 1 & 0 & 1 & 1 & 0 & 1 & 1 \\
 1 & 1 & 0 & 1 & 1 & 0 & 1 & 1 \\
 0 & 0 & 0 & 0 & 0 & 0 & 0 & 0 \\
 1 & 1 & 0 & 1 & 1 & 0 & 1 & 1 \\
 1 & 1 & 0 & 1 & 1 & 0 & 1 & 1 \\
                \end{pmatrix}}
            $}
            &
            $\vcenter{\hbox{\includegraphics[scale=0.3]{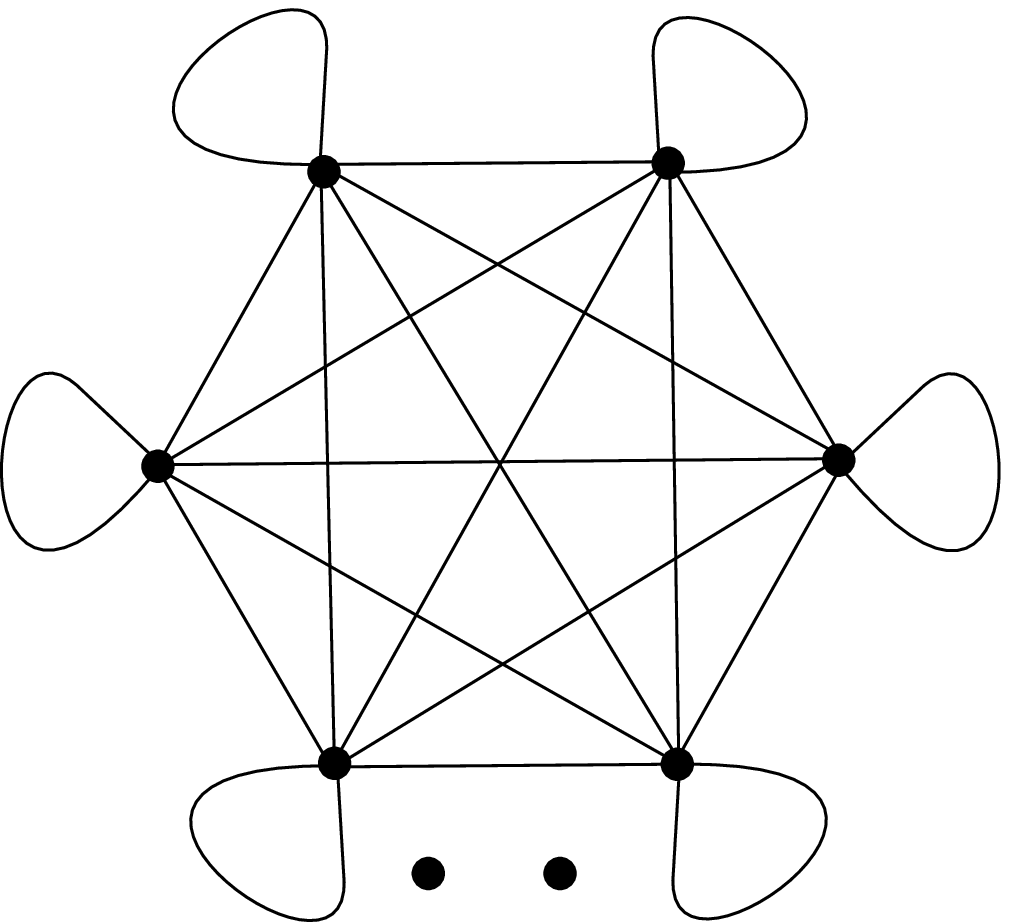}}}$
        \end{tabular}
     \\  [20pt]\hline
11   &
        \begin{tabular}{p{7cm}c}
            {$\displaystyle
                {\renewcommand{\arraystretch}{1.2}
                \left( \begin{array}{ccccccccccc}
 1 & 1 & 0 & 1 & 1 & 0 & 1 & 1 & 0 & 1 & 1 \\
 1 & 1 & 0 & 1 & 1 & 0 & 1 & 1 & 0 & 1 & 1 \\
 0 & 0 & 0 & 0 & 0 & 0 & 0 & 0 & 0 & 0 & 0 \\
 1 & 1 & 0 & 1 & 1 & 0 & 1 & 1 & 0 & 1 & 1 \\
 1 & 1 & 0 & 1 & 1 & 0 & 1 & 1 & 0 & 1 & 1 \\
 0 & 0 & 0 & 0 & 0 & 0 & 0 & 0 & 0 & 0 & 0 \\
 1 & 1 & 0 & 1 & 1 & 0 & 1 & 1 & 0 & 1 & 1 \\
 1 & 1 & 0 & 1 & 1 & 0 & 1 & 1 & 0 & 1 & 1 \\
 0 & 0 & 0 & 0 & 0 & 0 & 0 & 0 & 0 & 0 & 0 \\
 1 & 1 & 0 & 1 & 1 & 0 & 1 & 1 & 0 & 1 & 1 \\
 1 & 1 & 0 & 1 & 1 & 0 & 1 & 1 & 0 & 1 & 1 \\
\end{array} \right)}
            $}
            &
            $\vcenter{\hbox{\includegraphics[scale=.25]{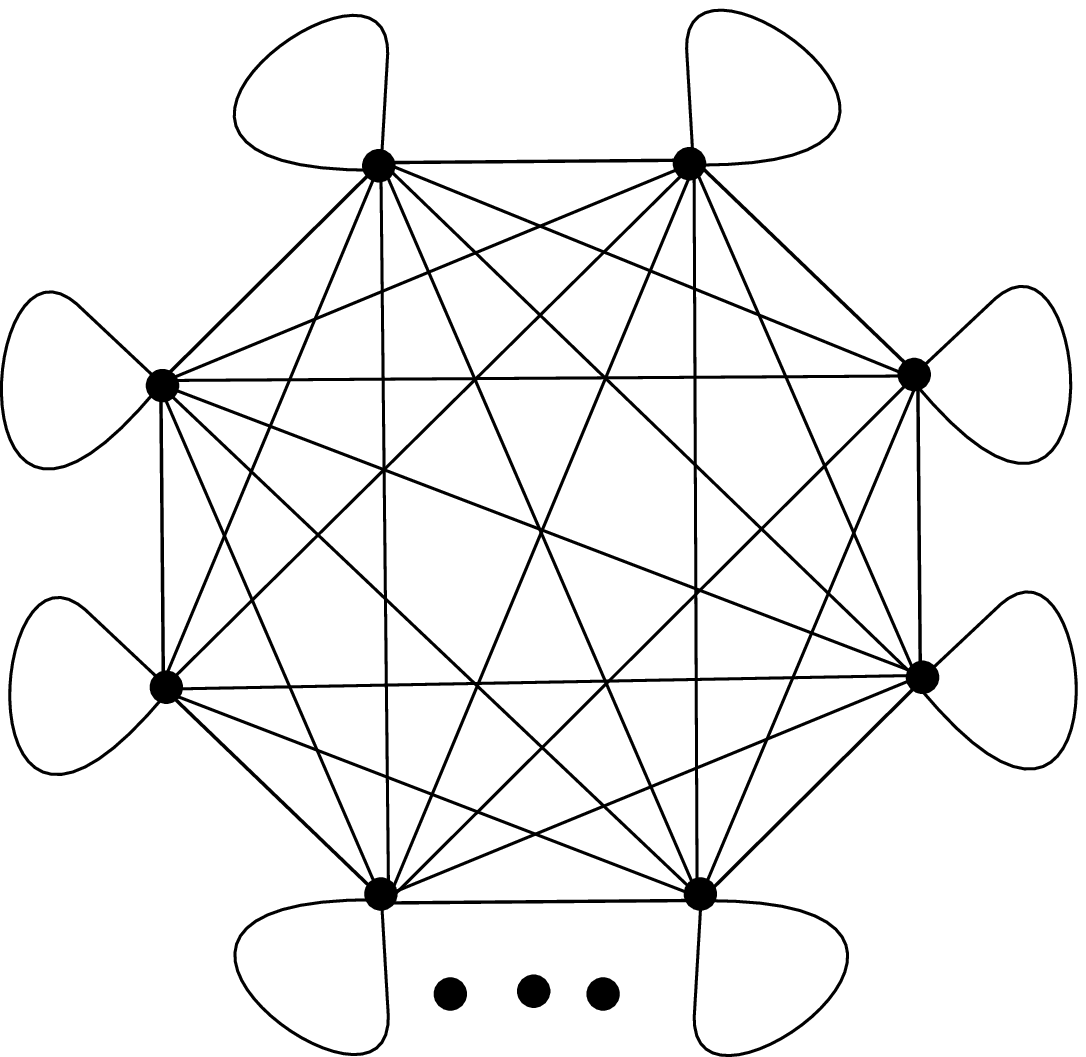}}}$
        \end{tabular}
\\ \hline
\end{tabular}
\caption{Adjacency Graphs of $B(n) \bmod 2$ in  $\mathcal{H}$.} \label{tabla2}
\end{table}

\section{Antidiagonal matrices and skew-triangular matrices in the Hosoya triangle}\label{antidiagonal}

In this section we study a family of antidiagonal matrices where the entries of the antidiagonal are points from
the ``median" of $\mathcal{H}$ (see Figure \ref{Blair_matrix2}~(a)).  Let $A$ be a $n\times n$ in this family. We prove that the eigenvalues of $A$ are again entries of $\mathcal{H}$ as well as
the entries of its eigenvectors (except maybe by the sign of those entries).
The eigenvectors of $A$ form the rows of a new square matrix $E$ where non-zero entries of $E$ are in the diagonal and antidiagonal.
The diagonal of $E$ is formed by all points in a horizontal line of $\mathcal{H}$, while the antidiagonal of $E$ is the same
antidiagonal of $A$ seen in Figure \ref{Blair_matrix2}~(b). Note that every first entry of a row of $A$ is located in the $n$th backslash diagonal of
$\mathcal{H}$, while every first entry of a row of $E$ is located in the first backslash diagonal of $\mathcal{H}$.

The matrix $E$ can be seen geometrically as a cross in $\mathcal{H}$ where the only non-zero entries
of $E$ are the first $n$ entries of the ``median" of $\mathcal{H}$ and the entries of the $n$th row of $\mathcal{H}$.
However, some eigenvectors of $A$ have negative entries, but in $\mathcal{H}$ all entries are positive, so our representation is not a perfect geometric representation. Therefore, to have a good geometric representation of the eigenvectors of $A$
we introduce a convention (only for this type of eigenvector). We are going to assume that negative entries of
the eigenvector of $A$ are represented by points on the left side of the ``median" of $\mathcal{H}$.

\subsection{Eigenvalues of antidiagonal matrices} First  we formally define the matrix $A$ in the following way:

 \begin{equation}\label{Def:Mat:A}
 A=[a_{ij}]_{1\le i,j\le n} \quad \text{ where } \quad a_{ij}=\begin{cases}
      F_{i}^2, & \text{ if } j=n-i+1;\\
     0, & \text{ otherwise} .
   \end{cases}
\end{equation}

\begin{figure}[!ht]
	\includegraphics[width=15cm]{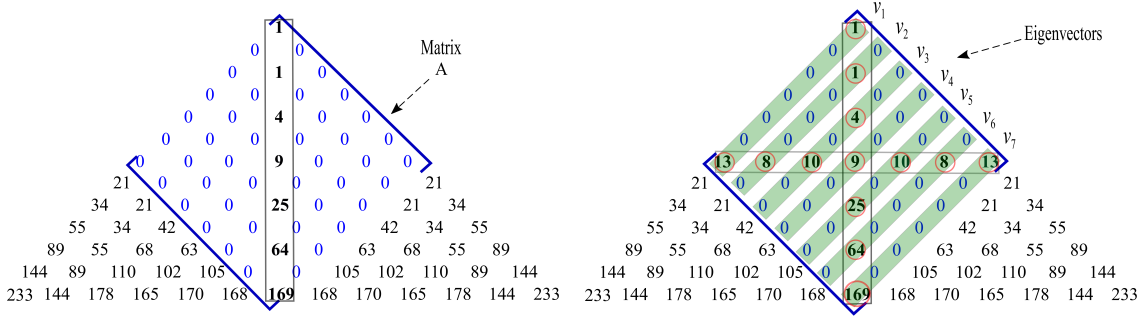}
\caption{(a) Anti-diagonal Matrix $A$.\hspace{3.3cm} (b) Eigenvectors of $A$.}\label{Blair_matrix2}
\end{figure}

\begin{proposition}\label{proposition:antidiagonal} If $A$ is a matrix as defined in \eqref{Def:Mat:A}, then the following hold

\begin{enumerate}
  \item \label{Prop:Ant:P1} the rank of $A$ equals $n$.
  \item \label{Prop:Ant:P2} If $n=2k$  and $1\le i\le k$, then the eigenvalues of $A$ are
   \[ \lambda_{j}=\begin{cases}
      \quad F_{i}F_{n-i+1}, & \text{ if } j=i;\\
     -F_{i}F_{n-i+1}, & \text{ if } j=k+i.
   \end{cases}
\]
\item \label{Prop:Ant:P3} If $n=2k-1$, then the eigenvalues of $A$ are $\lambda_{k}=F_{k}^2$ and for $1\le i< k$
   \[ \lambda_{j}=\begin{cases}
      \quad F_{i}F_{n-i+1}, & \text{ if } j=i;\\
     -F_{i}F_{n-i+1}, & \text{ if } j=k+i.
   \end{cases}
\]
  \item \label{Prop:Ant:P4} If $n=2k$, then the eigenvectors of $A$ are $x_{i}=[\alpha_{1},\alpha_{2},\ldots, \alpha_{n}]$ for $1\le i\le k$, where
   \[ \alpha_{t}=\begin{cases}
      \pm F_{j}, & \text{ if } t=j \text{ and } j=i;\\
     \pm F_{n-j+1}, & \text{ if } t=n-j+1 \text{ and } i=j;\\
      0, & \text{ if } i\ne j.
   \end{cases}
\]
    \item \label{Prop:Ant:P5} If $n=2k-1$, then the eigenvectors of $A$ are $y_{i}=[\beta_{1},\beta_{2},\ldots, \beta_{n}]$
    for $1\le i\le k-1$ and $y_{k}=[0,0,\ldots,1,0\ldots,0]$, where

       \[ \beta_{t}=\begin{cases}
      \pm F_{j}, & \text{ if } t=j \text{ and } j=i;\\
     \pm F_{n-j}, & \text{ if } t=n-j \text{ and } i=j;\\
     0,  & \text{ if } i\ne j.
   \end{cases}
\]

\end{enumerate}
\end{proposition}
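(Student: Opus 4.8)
The plan is to exploit the fact that $A$ is antidiagonal, so it scales and permutes the standard basis rather than mixing it globally. Reading off the columns from \eqref{Def:Mat:A}, one sees $A e_j = F_{n-j+1}^2\, e_{n-j+1}$, where $e_1,\dots,e_n$ is the standard basis; equivalently, $A$ acts through the involution $\sigma(j)=n-j+1$ with the antidiagonal entries as scale factors. Part \eqref{Prop:Ant:P1} is then immediate: each row and each column of $A$ carries exactly one nonzero entry (some $F_i^2$), so $A$ is a monomial (generalized permutation) matrix with $\det A=\pm\prod_{i=1}^n F_i^2\neq 0$, whence $\mathrm{rank}(A)=n$.

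For the eigenvalues and eigenvectors I would decompose $\mathbb{R}^n$ according to the orbits of $\sigma$. The involution $j\mapsto n-j+1$ splits $\{1,\dots,n\}$ into the two-element orbits $\{i,\,n-i+1\}$ with $i\neq n-i+1$, plus, when $n$ is odd, the single fixed point $i=k=(n+1)/2$. Each pair $\{i,n-i+1\}$ spans an $A$-invariant plane on which, in the ordered basis $(e_i,\,e_{n-i+1})$, the matrix of $A$ is the block $\left[\begin{smallmatrix} 0 & F_i^2 \\ F_{n-i+1}^2 & 0\end{smallmatrix}\right]$. Its characteristic polynomial is $\lambda^2-F_i^2F_{n-i+1}^2$, giving the eigenvalues $\pm F_i F_{n-i+1}$ with eigenvectors $(F_i,\pm F_{n-i+1})$ inside that plane; in the odd case the fixed point contributes $A e_k=F_k^2 e_k$, i.e.\ the eigenvalue $F_k^2$ with eigenvector $e_k$. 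Reassembling these across all blocks yields exactly the eigenvalues and eigenvectors recorded in Parts \eqref{Prop:Ant:P2}--\eqref{Prop:Ant:P5}: the sign of the $F_{n-i+1}$ coordinate records whether the eigenvalue is $+F_iF_{n-i+1}$ or $-F_iF_{n-i+1}$, and the eigenvectors are supported on the pairwise disjoint sets $\{i,n-i+1\}$ (or the singleton $\{k\}$), so they are automatically linearly independent, form a full eigenbasis, and in particular show $A$ is diagonalizable. An equivalent route, should one prefer to avoid invariant-subspace language, is to compute $\det(A-\lambda I)$ directly: the antidiagonal support lets a cofactor expansion factor it as $\prod_{i<k}\bigl(\lambda^2-F_i^2F_{n-i+1}^2\bigr)$ times the extra factor $(F_k^2-\lambda)$ in the odd case, recovering the same roots.

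Finally I would record the Hosoya-triangle interpretation, which is the point of the statement: since $H_{n,i}=F_iF_{n-i+1}$, each nonzero eigenvalue is $\pm H_{n,i}$, an entry of the $n$th row of $\mathcal{H}$, while each eigenvector has entries drawn from $\{\pm F_i,\pm F_{n-i+1}\}$, the Fibonacci numbers on the sides of $\mathcal{H}$. The computation itself is routine linear algebra; the only genuine care is the bookkeeping that separates the even case $n=2k$ from the odd case $n=2k-1$, where the central index $i=k$ is its own $\sigma$-image and must be treated as a $1\times1$ block rather than paired. I expect that index discipline, rather than any analytic difficulty, to be the main source of friction.
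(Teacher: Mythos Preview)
Your argument is correct and is cleaner than the paper's own. The paper handles the parts separately and by direct computation: it dismisses Part~\eqref{Prop:Ant:P1} as ``straightforward since the rank of the column space of $A$ is $n$,'' asserts for Parts~\eqref{Prop:Ant:P2}--\eqref{Prop:Ant:P3} that $\pm F_iF_{n-i+1}$ are the solutions of $\det(A-\lambda I_n)=0$ without further detail, and for Parts~\eqref{Prop:Ant:P4}--\eqref{Prop:Ant:P5} row-reduces $(A-F_iF_{n-i+1}I_n)\mathbf{x}=\mathbf{0}$ to the single relation $x_i=(F_i/F_{n-i+1})x_{n-i+1}$. Your route instead reads off $Ae_j=F_{n-j+1}^2\,e_{n-j+1}$ and decomposes $\mathbb{R}^n$ along the orbits of the involution $\sigma(j)=n-j+1$, reducing everything to the $2\times 2$ block $\left[\begin{smallmatrix}0&F_i^2\\ F_{n-i+1}^2&0\end{smallmatrix}\right]$ (plus the $1\times 1$ block at the fixed point when $n$ is odd). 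This buys you the eigenvalues, eigenvectors, linear independence, and diagonalizability in one stroke, and it makes the even/odd bookkeeping structurally obvious rather than a case split to be managed by hand. The cofactor-expansion alternative you mention at the end is essentially the paper's approach, so you have in fact subsumed it.
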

\begin{proof}
The proof of Part \eqref{Prop:Ant:P1} is straightforward since the rank of the column space of $A$ is $n$. We now prove Part \eqref{Prop:Ant:P2},
the proof of Part \eqref{Prop:Ant:P3}  is similar and it is omitted. Observe that if $n=2k$ and $I_{n}$ is the identity matrix, then
$\pm  F_{i}F_{n-i+1}$ for $1\le i\le k$ are the  solutions of the characteristic equation $\det(A-\lambda I_{n})=0$. Thus,
$\lambda_{j}= F_{i}F_{n-i+1}$ if $j=i$ and $\lambda_{j}= -F_{i}F_{n-i+1}$ if $j=k+i$.

We now prove Part \eqref{Prop:Ant:P4},  the proof of Part \eqref{Prop:Ant:P5} is similar and it is omitted.
The eigenvector corresponding to the eigenvalue $\lambda=F_{i}F_{n-i+1}$ for $1\le i\le k$ can be
found by solving the system of equations
$(A-F_{i}F_{n-i+1}I_{n})\mathbf{x}=\mathbf{0}$, where $\mathbf{x}=[x_{1},x_{2},\ldots,x_{n}]^{T}$ and $\mathbf{0}$ is the zero vector.

Using row operations on $(A-F_{i}F_{n-i+1}I_{n})$, the system of equations given above simplifies to
\[x_{i}-(F_{i}/F_{n-i+1})x_{n-i+1}=0 \text{ and } x_{j}=0 \text{ when } i\ne j.\]
Therefore, the eigenvector corresponding to the eigenvalue  $F_{i}F_{n-i+1}$ is given by $[\alpha_{1},\alpha_{2},\ldots, \alpha_{n}]$, where
   \[ \alpha_{t}=\begin{cases}
       F_{j}, & \text{ if } t=j \text{ and } i=j;\\
      F_{n-j+1}, & \text{ if } t=n-j+1 \text{ and } i=j;\\
      0, & \text{ if } i\ne j.
   \end{cases}
\]
The analysis for the eigenvector associated to $\lambda=-F_{i}F_{n-i+1}$ is similar. This completes the proof.
\end{proof}

{\bf Example.} If we have the matrix $A$ seen in Figure \ref{Blair_matrix2}~(a), then the eigenvalues of $A$ are
$\pm F_{1}F_{7}$, $\pm F_{2}F_{6}$, $\pm F_{3}F_{5}$, and $F_{4}^2$.
  The eigenvectors of $A$ are given by [0,0, 0,1,0, 0, 0] and

  \[\begin{cases}
 [F_{1}^2,0,0,0,0,0,F_{7}F_{1}]\\
 [F_{1}^2,0,0,0,0,0,-F_{7}F_{1}]
\end{cases},
\begin{cases}
[0,F_{2}^2,0,0,0,F_{6}F_{2},0]\\
 [0,F_{2}^2,0,0,0,-F_{6}F_{2},0],
\end{cases}
\begin{cases}
[0,0,F_{3}^2,0,F_{5}F_{3},0,0]\\
 [0,0,F_{3}^2,0,-F_{5}F_{3},0,0].
\end{cases}
\]

Comment: The matrix $A$ is diagonalizable. In fact, when $n=2k$, we have

\[\left[ {\begin{array}{ccccccc}
  0 & 0 &  \cdots&  0 & F_{1}^2\\
  0 &  0 &  \cdots& F_{2}^2 & 0\\
   \vdots &\vdots& \adots & \vdots &\vdots\\
  F_{n}^2 &  0  &  \cdots & 0 &0\\
  \end{array} } \right] P=P\left[ {\begin{array}{cccccc}
  F_{1}F_{2k} & 0 & 0 & \cdots& 0&0\\
   0 &  -F_{1}F_{2k} & 0 & \cdots & 0&0\\
   \vdots &\vdots& \vdots& \ddots &\vdots&\vdots\\
  0 &  0 & 0& \cdots &F_{k}F_{k+1}&0\\
   0 &  0 & 0& \cdots &0&-F_{k}F_{k+1}\\
  \end{array} } \right] \]
where

\[P=\left[ {\begin{array}{cccccc}
  F_{1} &F_{1} & 0 &0&  \cdots& 0\\
   0 &0&  F_{2}&F_{2}&  \cdots & 0\\
   \vdots &\vdots&\vdots &\vdots&\vdots & \vdots\\
   0 & 0&0 &0 &\cdots &  F_{k}\\
    0 & 0&0 &0 &\cdots & F_{k+1}\\
    \vdots& \vdots &\vdots& \vdots& \vdots & \vdots\\
   0  &0      & F_{2k-1} & -F_{2k-1}&   \cdots &  0  \\
  F_{2k} &-F_{2k}  & 0 &0& \cdots &0\\
  \end{array} } \right]. \]
Similarly, it is easy to verify that $A$ is diagonalizable when $n=2k-1$.

  Another interesting property of $A$ is that if
   \[A =\left[ {\begin{array}{ccccc}
                0 & 0 &  \cdots&  0 & F_{1}^2\\
                0 &  0 &  \cdots& F_{2}^2 & 0\\
   \vdots &\vdots& \adots & \vdots &\vdots\\
  F_{n}^2 &  0  &  \cdots & 0 &0\\
  \end{array} } \right],
  \text{ then }
  A^{-1}=\left[ {\begin{array}{ccccc}
                0 & 0 &  \cdots&  0 & \frac{1}{F_{n}^2}\\
                0 &  0 &  \cdots& \frac{1}{F_{n-1}^2} & 0\\
   \vdots &\vdots& \adots & \vdots &\vdots\\
  \frac{1}{F_{1}^2} &  0  &  \cdots & 0 &0\\
  \end{array} } \right].\]

 The properties we describe for the antidiagonal matrix $A$ are true in general. Thus, if
  $A^{\prime}=[a_{i,j}]_{1\le i,j\le n}$ is an antidiagonal matrix with the entries $a_{i,j}\in \mathbb{R}$ ---such that the following roots make sense in the set of real numbers---
 then the eigenvalues of $A^{\prime}$ are
   \[ \lambda_{i}=\begin{cases}
     \pm \sqrt{a_{i,n}a_{n-i+1,i}},\quad & \text{ if } n=2k;\\
     \pm \sqrt{a_{i,n}a_{n-i,i}}, \quad  &  \text{ if } n=2k-1.
   \end{cases}
\]
 The eigenvectors have the same behavior as in the previous case. For example, if $n=2k$, then the eigenvectors of
 $A$ are $x_{i}=[\alpha_{1},\alpha_{2},\ldots, \alpha_{2k}]$, where
  $\alpha_{j}=\pm a_{i,n+1-j}$ and $\alpha_{n+1-j}=\pm a_{n+1-j,i}$ if $j=i$ and $\alpha_{j}=0$ if $i\ne j$ for $1\le i,j\le k$.

  Note that if $A$ is an antidiagonal matrix (see definition \eqref{Def:Mat:A})  then the characteristic polynomial of $A$ is given by

   \[p(x)=\begin{cases}
 \;\; \displaystyle\prod_{i=1}^{n}\left(x^2 - F_{i}^2F_{n-i+1}^2\right), & \text{ when } n \text{ is even };\\
  \displaystyle \prod_{i=1}^{(n-1)/2}\left(x^2- F_{i}^2F_{n-i+1}^2\right)\left(x-F_{(n+1)/2}^{2}\right), & \text{ when } n \text{ is odd } .
  \end{cases}
  \]

  In fact, using Vieta's formula for polynomials we can say that the characteristic polynomial for $A$ is a polynomial of degree $n$ where the coefficient of $x^{n-1}$ is $\tr(A)$, the coefficient of $x^{0}$ is $\det(A)$ (determinant of $A$), and the coefficient of $x^{n-k}$ is given by
   \[(-1)^{k}\sum_{1\le {i_{1}}\le\ldots\le {i_{k}}\le n}\lambda_{i_{1}}\lambda_{i_{2}}\ldots \lambda_{i_{k}} \text{ where } \lambda_{i_{j}} \text{ represent the eigenvalues of } A.\]

\subsection{Determinants of skew-triangular matrices in the Hosoya triangle}\label{determinants}

In this section we define skew-triangular matrices in $\mathcal{H}$. This family of matrices does not necessarily have integers as eigenvalues.
So, we analyze their determinants to obtain some information about their eigenvalues.  We also discuss some properties of the determinants (see
the determinant in \eqref{antitriangular:mat} on page \pageref{antitriangular:mat}). The determinant of an antidiagonal matrix is well known in
linear algebra. Here we use this tool to show that the determinant of a member of the subfamily of matrices with entries in $\mathcal{H}$ is a product of points of this triangle. The geometry of the triangle helps us see these properties very clearly. We now define the family $T(n,k)$ of
skew-triangular matrices $\mathcal{H}$. If $k=2,3,\ldots,n+1$, then
 \begin{equation} T(n,k)=[a_{ij}]_{1\le i,j\le n} \quad \text{ where } \quad a_{ij}=\begin{cases}
      F_{i}F_{n-j+1}, & \text{ if } k\le i+j\le n+1;\\
     0, & \text{ otherwise} .
   \end{cases}
\end{equation}

From linear algebra we know that if $\lambda_1, \dots,\lambda_n$ are the eigenvalues of a matrix $A$, then the determinant of $A$ is given by,
$\det(A)= \prod_{i=1}^{n}\lambda_i$ (see \cite{Zhang}).  It is easy to verify that for a fixed $n$ and
$i\ne j \in \{2,\dots, n+1\}$, the matrices $T(n,i)$ and $T(n,j)$ do not necessarily have the same eigenvalues
(in most of the cases the eigenvalues are not integers). Proposition \ref{proposition:determinant} shows that if $n$ is fixed,
the product of the eigenvalues of $T(n,k)$ is equal to the product of eigenvalues of $T(n,n+1)$ for $1<k\le n$.
This is the product of points located in the ``median" of  $\mathcal{H}$.

\begin{proposition}\label{proposition:determinant}
If $2\le k \le n+1$, then for every $n\ge 2$ this holds
\[\det (T(n,k))=\begin{cases}
\quad \prod_{i=1}^{n}F_{i}^{2},& \text{ if } n\equiv 0 \text{ or }1 \bmod 4;\\
-\prod_{i=1}^{n}F_{i}^{2},& \text{ if } n\equiv 2 \text{ or } 3\bmod 4.
\end{cases}
\]
  \end{proposition}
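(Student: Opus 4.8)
The plan is to expand $\det(T(n,k))$ by the Leibniz formula and to show that, regardless of $k$ in the range $2\le k\le n+1$, exactly one permutation contributes a nonzero term, namely the reversal $\sigma(i)=n+1-i$. This collapses the determinant to a single signed product, which both explains why the answer is independent of $k$ and identifies it with the antidiagonal matrix $T(n,n+1)=A$ of \eqref{Def:Mat:A}, whose determinant is classical.

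First I would write $\det(T(n,k))=\sum_{\sigma\in S_{n}}\operatorname{sgn}(\sigma)\prod_{i=1}^{n}a_{i,\sigma(i)}$. By the definition of $T(n,k)$, a summand is nonzero precisely when $a_{i,\sigma(i)}\ne 0$ for every $i$, i.e. when $k\le i+\sigma(i)\le n+1$ for all $i$. The crucial step is a counting argument: summing the upper inequality over $i$ and using $\sum_{i}i=\sum_{i}\sigma(i)=n(n+1)/2$ gives $n(n+1)=\sum_{i}\bigl(i+\sigma(i)\bigr)\le n(n+1)$. Since equality holds and each term is bounded above by $n+1$, every term must attain that bound, forcing $i+\sigma(i)=n+1$ for all $i$, that is, $\sigma(i)=n+1-i$. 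The lower constraint $i+\sigma(i)\ge k$ is then automatic because $n+1\ge k$. This is the only nontrivial point in the argument; once it is established, the remaining steps are bookkeeping.

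For the surviving reversal permutation I would evaluate the product $\prod_{i=1}^{n}a_{i,n+1-i}=\prod_{i=1}^{n}F_{i}F_{n-(n+1-i)+1}=\prod_{i=1}^{n}F_{i}^{2}$, and recall that the reversal permutation of $\{1,\dots,n\}$ has exactly $\binom{n}{2}$ inversions, so $\operatorname{sgn}(\sigma)=(-1)^{n(n-1)/2}$. Combining these yields $\det(T(n,k))=(-1)^{n(n-1)/2}\prod_{i=1}^{n}F_{i}^{2}$ for every admissible $k$.

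Finally I would translate the sign into the stated residues by checking the parity of $n(n-1)/2$: it is even exactly when $n\equiv 0$ or $1\bmod 4$ and odd exactly when $n\equiv 2$ or $3\bmod 4$, giving the two cases of the proposition. I do not expect a serious obstacle; the only subtlety to watch is that the nonzero band $k\le i+j\le n+1$ genuinely widens as $k$ decreases, so one must be sure those additional nonzero entries never occur together in a single nonzero permutation product—which is exactly what the summation argument guarantees.
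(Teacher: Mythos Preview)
Your proof is correct and follows exactly the Leibniz-formula route the paper itself invokes; the paper's own proof is simply the one line ``This is straightforward using cofactors or using Leibnitz's formula of the sum over all permutations,'' and your summation argument that forces $i+\sigma(i)=n+1$ for every $i$ is precisely the detail that makes that line work. Nothing needs to be changed.
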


\begin{proof} This is straightforward using cofactors or using Leibnitz's formula of the sum over all permutations of the numbers $1,2,\ldots,n$.
\end{proof}

Note that $\det (T(n,k))=\det(A)$ where $A$ is the $n\times n$ antidiagonal matrix defined in Section \ref{antidiagonal} on page \pageref{antidiagonal}.
This result can be extended to matrices which have entries $a_{ij}=F_{i}F_{n-j+1}$ such that $n-k\le i+j\le n+1$  and
$k$ is either $1$ or $2$ or any positive integer less than $n-2$. For example, if $n=4$  and $\det(A)$ is denoted by $|A|$ then it holds that,

\begin{equation}\label{antitriangular:mat}
\begin{vmatrix}
                    F_{1}F_{4} &F_{1}F_{3} &F_{1}F_{2}& F_{1}^2\\
  F_{2}F_{4} &F_{2}F_{3} &F_{2}^2 & \\
F_{3}F_{4}&F_{3}^2& &\\
F_{4}^2& &\bigzero& \\
                \end{vmatrix}=
\begin{vmatrix}
                    0 & F_{1}F_{3}&F_{1}F_{2}& F_{1}^2\\
   F_{2}F_{4}&F_{2}F_{3} &F_{2}^2 & \\
F_{3}F_{4}&F_{3}^2& &\\
F_{4}^2&&\bigzero &\\
                \end{vmatrix}=\cdots=
                \begin{vmatrix}
                    0 &0 &0& F_{1}^2\\
   0&0 &F_{2}^2 & \\
0&F_{3}^2&  & \\
F_{4}^2&& \bigzero&\\
                \end{vmatrix}=\prod_{i=1}^{4}F_{i}^{2}.
\end{equation}

\section{Acknowledgement}  	
All the authors of this paper were partially supported by The Citadel Foundation. The authors would like to thank
Dr. Mei Chen for the numerous insightful discussions they had with her on various topics from linear algebra,
these discussions helped improve the paper.

\bigskip
\hrule
\bigskip
	
\noindent 2010 {\it Mathematics Subject Classification}:
Primary 11B39, 15A03, 15A18; Secondary 11B83.
	
\noindent \emph{Keywords: }
Fibonacci number, Hosoya triangle,  matrix, eigenvalue, and eigenvector.
	

\begin{thebibliography}{99}
\bibitem{Bondarenko} B.~A.~Bondarenko,  Obobshchennye treugol'niki i piramidy Paskalya, ikh fraktali, grafy i prilozheniya
[Generalized Pascal triangles and pyramids, their fractals, graphs and applications]  ``Fan'', Tashkent, 1990.

\bibitem{BakerSwart}  B.~Baker Swart, R.~Fl\'{o}rez, D.~A.~Narayan, and G.~L.~Rudolph, Extrema property of the k-ranking of directed paths and cycles,
\emph{AKCE Int. J. Graphs Comb.} 13 (2016), no. 1, 38--53.

\bibitem{Czabarka} \'{E}.~Czabarka, R.~Fl\'{o}rez, and L.~Junes, A discrete convolution on the generalized Hosoya triangle,
\emph{J. Integer Seq.} \textbf{18} (2018),  Article 15.1.6.

\bibitem{deo} N.~Deo and M.~J.~Quinn, Pascal graphs and their properties, \emph{Fibonacci Quart}. 21 (1983), no. 3, 203--214.

\bibitem{florezHiguitaJunesGCD} R.~Fl\'{o}rez, R.~Higuita, and L.~Junes,
GCD property of the generalized star of David in the generalized Hosoya triangle,
\emph{J. Integer Seq.} \textbf{17} (2014),  Article 14.3.6, 17 pp.
		
\bibitem{florezjunes} R.~Fl\'{o}rez and L.~Junes,
GCD properties in Hosoya's triangle, \emph{Fibonacci Quart.} \textbf{50} (2012), 163--174.

\bibitem{florezHiguitaMukherjeeGeometry} R.~Fl\'{o}rez, R.~Higuita, and A.~Mukherjee, The Geometry of some Fibonacci identities in the Hosoya triangle. Submitted.
\url{https://arxiv.org/abs/1804.0248}

\bibitem{griffiths} M.~Griffiths, \emph{Fibonacci diagonals},  Fibonacci Quart \textbf{10.1} (2011) 51--56.

\bibitem{Golub} G.~H.~Golub and C.~F.~Van Loan, \emph{Matrix computations}, Fourth edition,
Johns Hopkins Studies in the Mathematical Sciences, Johns Hopkins University Press, MD, 2013

\bibitem{Horn} R.~A.~Horn and C.~R.~Johnson, \emph{Topics in matrix analysis. Corrected reprint of the 1991 original.} Cambridge University Press, 1994.

\bibitem{hosoya} H.~Hosoya, Fibonacci Triangle, \emph{Fibonacci Quart.}  \textbf{14.3} (1976), 173--178.
	
\bibitem{koshy} T.~Koshy, \emph{Fibonacci and Lucas Numbers with Applications},
John Wiley, New York, 2001.

\bibitem{koshy2} T.~Koshy, \emph{Triangular Arrays with Applications},
Oxford University Press, 2011.

\bibitem{lee} G.~Lee, J.~Kim, and S.~Lee, Factorizations and eigenvalues of Fibonacci and symmetric Fibonacci matrices,
 \emph{Fibonacci Quart.} \textbf{40} (2002), no. 3, 203--211.

 \bibitem{Moree} P.~Moree, Convoluted convolved Fibonacci numbers, \emph{J. Integer Seq.} \textbf{7} (2004),  Article 04.2.2.

 \bibitem{romik} D.~Romik, Shortest paths in the tower of Hanoi graph and finite automata, \emph{SIAM J. Discrete Math.} \textbf{20} (2006), 610--622.

\bibitem{sloane} N.~J.~A.~Sloane, The On-Line Encyclopedia of Integer Sequences, \url{http://oeis.org/}.

\bibitem{stanimirovic}  P.~Stanimirovi\'{c}, J.~Nikolov, and I.~Stanimirovi\'{c}, Generalizations of Fibonacci and Lucas matrices,
\emph{Discrete Appl. Math.} \textbf{156} (2008), no. 14, 2606 --2619.

\bibitem{Zhang} F.~Zhang, \emph{Matrix theory. Basic results and techniques}, second edition, universitext, Springer, 2011.

\end{thebibliography}
\end{document}